\newtheorem{theorem}{Theorem}[section]
\newtheorem{lemma}[theorem]{Lemma}
\newtheorem*{Acknowledgement}{Acknowledgement}
\theoremstyle{definition}
\newtheorem{example}[theorem]{Example}
\theoremstyle{remark}
\numberwithin{equation}{section}
\def\DJ{\leavevmode\setbox0=\hbox{D}\kern0pt\rlap
 {\kern.04em\raise.188\ht0\hbox{-}}D}
\begin{document}

\title[Solution to a pair of matrix equations]{On new existence of a unique common solution to a pair of non-linear matrix equations}

\author[H.\ Garai, L.K. \ Dey, W. Sintunavarat,  S. \ Som, S. \ Raha]
{Hiranmoy Garai$^{1}$, Lakshmi Kanta Dey$^{2}$, Wutiphol Sintunavarat$^{3}$,  Sumit Som$^{4}$, Sayandeepa Raha$^{5}$}

\thanks{Corresponding author: W. Sintunavarat (E-mail: wutiphol@mathstat.sci.tu.ac.th)}
\address{{$^{1}$\,} Hiranmoy Garai,
                    Department of Mathematics,
                    National Institute of Technology
                    Durgapur, India.}
                    \email{hiran.garai24@gmail.com}
\address{{$^{2}$\,} Lakshmi Kanta Dey,
                    Department of Mathematics,
                    National Institute of Technology
                    Durgapur, India.}
                    \email{lakshmikdey@yahoo.co.in}                    
\address{{$^{3}$\,} Wutiphol Sintunavarat, Department of Mathematics and Statistics,    					        Faculty of Science and Technology, 
                    Thammasat University Rangsit Center, Pathum Thani 12120, Thailand.}
                    \email{wutiphol@mathstat.sci.tu.ac.th}
\address{{$^{4}$\,} Sumit Som,
                    Department of Mathematics,
                    National Institute of Technology
                    Durgapur, India}
                    \email{somkakdwip@gmail.com}
\address{{$^{5}$\,} Sayandeepa Raha,
                    Department of Biotechnology,
                    National Institute of Technology
                    Durgapur, India}
                    \email{rahasayandeepa@gmail.com}
\subjclass{$47H10$, $54H25$.}
\keywords{Common fixed point, non-linear matrix equation, Thompson metric.}

\begin{abstract}
The main goal of this article is to study the existence of a unique positive definite common solution to a pair of matrix equations of the form \begin{eqnarray*}
X^r=Q_1 + \displaystyle \sum_{i=1}^{m} {A_i}^*F(X)A_i \mbox{ and } X^s=Q_2 + \displaystyle \sum_{i=1}^{m} {A_i}^*G(X)A_i
\end{eqnarray*}
where $Q_1,Q_2\in P(n)$, $A_i\in M(n)$ and $F,G:P(n)\to P(n)$ are certain functions and $r,s>1$. In order to achieve our target, we take the help of elegant properties of Thompson metric on the set of all $n \times n$ Hermitian positive definite matrices. To proceed this, we first derive a common fixed point result for a pair of mappings utilizing a certain class of control functions in a metric space. Then, we obtain some sufficient conditions to assure a unique positive definite common solution to the said  equations. Finally, to validate  our results, we  provide a couple of numerical examples with diagrammatic representations of  the convergence behaviour of iterative sequences.
\end{abstract}
 
\maketitle

\setcounter{page}{1}

\section{Introduction}

The metrical fixed point theory is one of essential and important tools for solving various types equations arising in different mathematical problems.
Throughout the last hundred years, many researchers have studied metrical fixed point theory by investigating several interesting existence (and uniqueness) fixed point results for different types of contractions, and they have utilized those results for solving different equations arising in  optimization theory, approximation theory, learning theory,  variational inequality and many more.
Based on the  impact of several real-world problems, the metrical fixed point theory becomes one of the foremost subject in modern research. 
The topic of common fixed points for a pair or a family of mappings in metric  spaces is of great interest and play important roles in many applications. Throughout the last decade, many mathematicians develop the study of common fixed points by establishing several sufficient conditions involving compatibility, weak compatibility, commutativity, continuity, the E.A. property, the common limit in the range property and numerous others for existence (and uniqueness) of common fixed point of pair and family of mappings. Motivated by some important and interesting fixed point and common fixed point results, we enrich the common fixed point theory in terms of a control function in Section 2 of this  work. 

On the another side, there is a huge application of matrix equations in various types  of problems in engineering, computer sciences, stability analysis and many more,  (see \cite{W1,TW} and the references therein). Throughout the last few years, many researchers have achieved different kinds of sufficient conditions for ensuring general solution(s) of different types of matrix equations, see \cite {RR,ER,BS,  HBP, HBK, SS,SS1, SS2}. Besides these, very recently, Garai and Dey \cite{GD} noticed that there are some problems in non-interacting control theory, where two different matrix equations arise and these types of problems require common solution to certain pair of matrix equations.  Garai and Dey \cite{GD} were the first to handle this situation. More precisely, they obtained some adequate conditions to ensure the existence of unique common positive definite solution to the following equations
\begin{equation}\label{me1}
X=Q_1 + \displaystyle \sum_{i=1}^{m} {A_i}^*F(X)A_i, 
\end{equation}
\begin{equation}\label{me01}
X=Q_2 + \displaystyle \sum_{i=1}^{m} {A_i}^*G(X)A_i, 
\end{equation}
where $Q_1\ Q_2\in P(n)$ (set of all $n\times n$ Hermitian positive definite matrices), $A_i\in M(n)$ (set of all $n \times n$ matrices), $A_i^*$ denotes the conjugate transpose of $A_i$, $F, G$ are mappings from $H(n)$ (set of all $n \times n$ Hermitian matrices) into itself. But the above type of equations are particular case of the general matrix equation
\begin{equation}\label{me001}
X^r=Q + \displaystyle \sum_{i=1}^{m} {A_i}^*F(X)A_i, 
\end{equation}
where $Q,\ A_i, F$ are as above and $r\geq 1$. But, in reality, most of the time we have to handle the general matrix equations.

Motivated by the above scenario, our major goal of the present paper is to  focus on two pairs of matrix equations. The first pair is as follows:
\begin{align}
X^s=Q_1 + \displaystyle \sum_{i=1}^{m} {A_i}^*F(X)A_i \label{add01},\\
X^s=Q_2 + \displaystyle \sum_{i=1}^{m} {A_i}^*G(X)A_i \label{add02}
\end{align}
where $Q_1,~Q_2\in P(n)$, $A_i's$ are arbitrary nonsingular complex matrices of order $n$, $F,~G$ are two functions from $P(n)$ to $P(n)$ and $s>1$ is a real number, meanwhile, the second pair is as follows:
\begin{align}
X^r=\displaystyle \sum_{i=1}^{m} {A_i}^*F(X)A_i, \label{add03}\\
X^s=\displaystyle \sum_{i=1}^{m} {A_i}^*G(X)A_i \label{add04}
\end{align}
where $F,G$ are same as the first system, $A_i$'s are arbitrary $n \times n$ unitary matrices, and $r,s>1$ are real numbers. 

In order to solve the two mentioned systems, we first obtain a common fixed point result using  a special type of family of functions in complete metric spaces. Based on this common fixed point result, we present some adequate conditions for the existence of unique common positive definite solution of both the systems. Finally, we authenticate our obtained sufficient conditions by presenting some numerical examples along with clear diagrammatic representations of  the convergence behaviour of iterative sequences.

\section{The common fixed point results}
In this section, we present a new common fixed point result for a pair of mappings.  Before investigating such a common fixed point result, first, we introduce a collection of some control functions.
 
For a fixed real number $\alpha \in [0,1)$, we use the symbol $\Psi_{\alpha}$ to denote the collection of all mappings $\psi:\mathbb{R_+}^3 \to[0,\infty)$ satisfying the following conditions:
\begin{enumerate}
\item[$(\psi')$] $\psi$ is continuous;
\item[$(\psi'')$] if $b \leq \psi(a,a,b)$ or $b \leq \psi(b,a,a)$ or $b \leq \psi(a,b,a)$, then $b\leq \alpha a$.
\end{enumerate}
Some examples of mappings belonging to the class $\Psi_{\alpha}$ are given by the following:
\begin{enumerate}
\item[$(i)$] a mapping $\psi:\mathbb{R_+}^3 \to[0,\infty)$ which is defined by 
$$\psi(a,b,c)=\alpha a$$ 
for all $a,,b,c\in \mathbb{R_+}$, where $\alpha \in [0,1)$;
\item[$(ii)$] a mapping $\psi:\mathbb{R_+}^3 \to[0,\infty)$ which is defined by 
$$\psi(a,b,c)=Ma+Nb+Oc$$ 
for all $a,b,c \in \mathbb{R_+}$, where $M,N,O$ are nonnegative real numbers with $M+N+O<1$;
\item[$(iii)$] a mapping $\psi:\mathbb{R_+}^3 \to[0,\infty)$ which is defined by 
$$\psi(a,b,c)=\alpha \max\{a,b,c\}$$ 
for all $a,b,c \in \mathbb{R_+}$, where $\alpha \in [0,1)$.
\end{enumerate}

\begin{theorem}\label{th1}
Let $(C,d)$ be a complete metric space and $T_1,T_2:C\to C$ be two mappings satisfying the following condition:
\begin{equation}\label{e1}
d(T_1(x),T_2(y)) \leq \psi\left(d(x,y),d(x,T_1(x)),d(y,T_2(y))\right)
\end{equation}
for all $x,y \in C$, where $\psi \in \Psi_{\alpha}$. Then the following assertions hold:
\begin{enumerate}
\item[$(i)$] $T_1$ and $T_2$ have a unique common fixed point;
\item[$(ii)$] for any $u_0\in C$, the sequence $\{u_n\}$ converges to that common fixed point, where $u_{2k}=T_2(u_{2k-1})$ and $u_{2k+1}=T_1(u_{2k})$ for all $k \in \mathbb{N},$ and the error estimation is given by
$$d(u_n,z)\leq \frac{\alpha^{n-1}}{1-\alpha}d(u_0,u_1),$$ 
for all $n \in \mathbb{N}$, where $z$ is the unique common fixed point of $T_1$ and $T_2$.
\end{enumerate}
\end{theorem}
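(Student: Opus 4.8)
The plan is to run a Picard-type iteration adapted to the alternating pair $T_1,T_2$ and to extract geometric decay of consecutive distances directly from the defining property $(\psi'')$, in the spirit of the classical Banach contraction argument. Starting from an arbitrary $u_0\in C$, I would build $\{u_n\}$ exactly as prescribed, so that the odd-indexed terms are produced by $T_1$ and the even-indexed terms by $T_2$; the whole proof then reduces to controlling $d(u_n,u_{n+1})$ and passing to the limit.

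The central computation is to show $d(u_n,u_{n+1})\le \alpha\, d(u_{n-1},u_n)$ for every $n$, and here the three alternatives in $(\psi'')$ are precisely what the alternating structure forces. For a step of the form $d(u_{2k+1},u_{2k+2})=d(T_1(u_{2k}),T_2(u_{2k+1}))$, I would apply \eqref{e1} with $x=u_{2k}$, $y=u_{2k+1}$ and simplify the self-distances $d(x,T_1(x))=d(u_{2k},u_{2k+1})$ and $d(y,T_2(y))=d(u_{2k+1},u_{2k+2})$; this puts the inequality in the shape $b\le\psi(a,a,b)$, so the first clause of $(\psi'')$ yields $b\le\alpha a$. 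For the next step $d(u_{2k+2},u_{2k+3})=d(T_1(u_{2k+2}),T_2(u_{2k+1}))$ (using symmetry of $d$), the same substitution instead produces the shape $b\le\psi(a,b,a)$, and the third clause of $(\psi'')$ again gives $b\le\alpha a$. By induction this delivers $d(u_n,u_{n+1})\le\alpha^n d(u_0,u_1)$, and a standard telescoping estimate shows $\{u_n\}$ is Cauchy, so completeness of $(C,d)$ furnishes a limit $z$.

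To see that $z$ is a common fixed point I would pass to the limit in \eqref{e1} using the continuity $(\psi')$. Feeding $x=z$, $y=u_{2k+1}$ into \eqref{e1} and letting $k\to\infty$ gives $d(T_1(z),z)\le\psi(0,d(T_1(z),z),0)$, which is the $b\le\psi(a,b,a)$ pattern with $a=0$, forcing $d(T_1(z),z)\le\alpha\cdot 0=0$; symmetrically, $x=u_{2k}$, $y=z$ yields $d(z,T_2(z))\le\psi(0,0,d(z,T_2(z)))$ and hence $T_2(z)=z$. Uniqueness follows the same template: if $z,w$ are both common fixed points, then \eqref{e1} with $x=z$, $y=w$ collapses to $d(z,w)\le\psi(d(z,w),0,0)$, the $b\le\psi(b,a,a)$ pattern with $a=0$, so $d(z,w)=0$. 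Finally, letting the upper index tend to infinity in the Cauchy estimate yields $d(u_n,z)\le\frac{\alpha^n}{1-\alpha}d(u_0,u_1)$, which is even slightly sharper than the stated bound $\frac{\alpha^{n-1}}{1-\alpha}d(u_0,u_1)$ and therefore implies it.

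The only genuinely delicate point is the bookkeeping of which argument slot of $\psi$ carries the ``new'' distance $b$ at each parity: because $T_1$ and $T_2$ alternate while \eqref{e1} is not symmetric in its three entries, one must verify that every step lands in one of the three admissible patterns of $(\psi'')$. This is exactly why a single inequality such as $b\le\psi(a,a,b)$ would not suffice and all three clauses are genuinely invoked (the first and third in the decay step and fixed-point verification, the second in the uniqueness argument). Everything else is routine, and continuity $(\psi')$ enters only in the limit passage that identifies $z$ as the common fixed point.
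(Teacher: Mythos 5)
Your proposal is correct and follows essentially the same route as the paper's proof: the same alternating iteration, the same use of the three patterns in $(\psi'')$ to get $d(u_{n+1},u_{n+2})\leq \alpha\, d(u_n,u_{n+1})$, the same continuity-based limit passage to identify $z$ as a common fixed point, the same uniqueness argument, and the same telescoping error estimate. The only differences are cosmetic: you verify $T_1(z)=z$ explicitly where the paper says ``similarly,'' and your bound $d(u_n,z)\leq \frac{\alpha^{n}}{1-\alpha}d(u_0,u_1)$ is one power of $\alpha$ sharper than the stated one, which it immediately implies.
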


\begin{proof}
Let $u_0\in C$ be arbitrary but fixed and consider the sequence $\{u_n\}$ which is defined by taking 
\begin{center}
	$u_{2k}=T_2(u_{2k-1})$ and $u_{2k+1}=T_1(u_{2k})$ 
\end{center}
for all $k \in \mathbb{N}$. First, we assume that $n$ is even. Then putting $x=u_n$ and $y=u_{n+1}$ in  \eqref{e1}, we get
\begin{eqnarray}
	d(u_{n+1},u_{n+2}) 
		&=& d(T_1(u_n),T_2(u_{n+1})) \nonumber \\
		&\leq& \psi(d(u_n,u_{n+1}),d(u_n,T_1(u_n)),d(u_{n+1},T_2(u_{n+1}))) \nonumber \\
		&=& \psi(d(u_n,u_{n+1}),d(u_n,u_{n+1}),d(u_{n+1},u_{n+2})). \nonumber
\end{eqnarray}
By using the property of $\psi$, we get $$d(u_{n+1},u_{n+2})\leq \alpha d(u_n,u_{n+1}),$$ 
where $\alpha$ is a constant lying in $[0,1)$. 
Next, we assume that $n$ is odd. Then the proceeding in a similar manner as above, we can show that $$d(u_{n+1},u_{n+2})\leq \alpha d(u_n,u_{n+1}).$$
Therefore, we have
\begin{eqnarray}
d(u_{n+1},u_{n+2})
&\leq& \alpha d(u_n,u_{n+1}) \nonumber \\
&\leq& \alpha^2 d(u_{n-1},u_{n}) \nonumber \\
& \vdots& \nonumber  \\
&\leq& \alpha^n d(u_{0},u_{1}) \nonumber 
\end{eqnarray}
for any $n \in \mathbb{N}$. 
Since $\alpha\in [0,1)$, it follows that the infinite series $\sum_{n=1}^{\infty} d(u_n,u_{n+1})$ is convergent. Thus, $\{u_n\}$ is a Cauchy sequence in $C$ and so there exists $z\in C$ such that $u_n\to z$ as $n\to \infty$. So the subsequences $\{u_{2k}\}$ and $\{u_{2k+1}\}$ converge to $z$.

Now, for any $k \in \mathbb{N}$, we have
\begin{eqnarray}
d(u_{2k+1},T_2(z))
&=& d(T_1(u_{2k}),T_2(z))	 \nonumber \\
&\leq &  \psi(d(u_{2k},z),d(u_{2k},T_1(u_{2k})),d(z,T_2(z))) \nonumber \\
&=& \psi(d(u_{2k},z),d(u_{2k},u_{2k+1}),d(z,T_2(z))). \nonumber \\
\end{eqnarray}
By taking the limit as $k \to \infty$ in the above inequality, we get
$$d(z,T_2(z)) \leq \psi(0,0,d(z,T_2(z)))$$
and so $d(z,T_2(z)) \leq \alpha \cdot 0$. This implies that $z=T_2(z)$. Similarly, we can show that $z=T_1(z)$. So $z$ is a common fixed point of $T_1$ and $T_2$. 

Next, we will show the uniqueness of the common fixed point of $T_1$ and $T_2$. For this, let $z_1$ be another common fixed point of $T_1$ and $T_2$. Then putting $x=z$, $y=z_1$ in \eqref{e1}, we get 
\begin{eqnarray}
	d(T_1(z),T_2(z_1)) 
	&\leq & \psi(d(z,z_1),d(z,T_1(z)),d(z_1,T_2(z_1))) \nonumber \\
	&=& \psi(d(z,z_1),0,0) \nonumber 
\end{eqnarray}
and so $d(z,z_1) \leq \alpha \cdot 0$. It yields that $z=z_1$. Therefore, $z$ is the unique common fixed point of $T_1$ and $T_2$. This proves $(i)$.

Finally, we will show that the assertion $(i)$ holds.
Now for any $n,m \in \mathbb{N}$ with $n<m$, we have
\begin{eqnarray}
d(u_n,u_m)&\leq& d(u_n,u_{n+1})+d(u_{n+1}.u_{n+2})+ \hdots + d(u_{m-1},u_m)\nonumber\\
&\leq& (\alpha^{n-1}+\alpha^{n}+\hdots+\alpha^{m-2})d(u_0,u_1)\nonumber\\
&=&\frac{\alpha^{n-1}-\alpha^m}{1-\alpha} d(u_0,u_1). \nonumber
\end{eqnarray}
Keeping $n$ fixed and letting $m \to \infty$ in the above equation, we get 
$$d(u_n,z)\leq \frac{\alpha^{n-1}}{1-\alpha} d(u_0,u_1)$$ 
for all $n\in \mathbb{N}$. 
This proves $(ii)$.
\end{proof}
\section{Common solution to matrix equations}


Right through this section, we consider matrices over the set of complex numbers.  We use the notation $I_n$ as the $n\times n$ identity matrix. The Thompson metric $d$ on the set $P(n)$ is defined by 
$$d(A,B)= \max\{\log W(A/B), \log W(B/A)\}$$ 
for all $A,B \in P(n)$, where $$W(A/B):=\inf\{\delta>0:A\leq \delta B\}=\lambda^{+}(B^{-\frac{1}{2}}AB^{-\frac{1}{2}})$$ 
is the maximum eigenvalue of the matrix $B^{-\frac{1}{2}}AB^{-\frac{1}{2}}$. It is well known that the set $P(n)$ is complete with respect to the Thompson metric $d$. The following properties involving  the Thompson metric are necessary to recall for the sake of our  developments.

\begin{lemma}[\cite{L,T}]
	Let $d$ be the Thompson metric on the set $P(n)$. Then
	\begin{itemize}
		\item[$(i)$] $d(A,B) = d(A^{-1},B^{-1})= d(MAM^{*},MBM^{*})$ for all $A,B \in P(n)$ and non-singular matrix $M$.
		\item[$(ii)$] $d(A^{r},B^{r})\leq \mid r\mid d(A,B)$ for all $A,B \in P(n)$ and $r \in [-1,1]$.
		\item[$(iii)$] $d(A+B,C+D)\leq \max\{d(A,C),d(B,D)\}$ for all $A,B,C,D \in P(n)$. In particular, $d(A+B,A+D)\leq d(B,D)$.
	\end{itemize}
\end{lemma}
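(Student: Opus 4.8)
The plan is to reduce every assertion to the defining characterization of $W(\cdot/\cdot)$: for $A,B \in P(n)$ and $\delta > 0$ one has $A \leq \delta B$ if and only if $\delta \geq W(A/B)$, with the infimum attained, so that $A \leq W(A/B)\,B$. Writing the metric as $d(A,B) = \max\{\log W(A/B),\, \log W(B/A)\}$, it then suffices in each case to control the two quantities $W(\cdot/\cdot)$ attached to the transformed matrices and to take the maximum of their logarithms.

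For $(i)$ I would first treat inversion. Using that $X \leq Y \iff Y^{-1} \leq X^{-1}$ for positive definite matrices (order-reversal of the inverse), the relation $A^{-1} \leq \delta B^{-1} = (\delta^{-1}B)^{-1}$ is equivalent to $\delta^{-1} B \leq A$, i.e. $B \leq \delta A$; hence $W(A^{-1}/B^{-1}) = W(B/A)$. Interchanging the roles of $A$ and $B$ and taking the maximum of logarithms gives $d(A^{-1},B^{-1}) = d(A,B)$. For congruence invariance I would use that for nonsingular $M$ the map $X \mapsto MXM^{*}$ preserves positive semidefiniteness in both directions, so $A \leq \delta B \iff MAM^{*} \leq \delta MBM^{*}$; thus $W(MAM^{*}/MBM^{*}) = W(A/B)$, and likewise with $A,B$ interchanged, yielding $d(MAM^{*},MBM^{*}) = d(A,B)$.

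The heart of the argument is $(ii)$, and here the main obstacle is that $A^{r}$ and $B^{r}$ need not interact multiplicatively with the order relation, since $A$ and $B$ do not commute in general. The key external input is the L\"owner--Heinz inequality: for $r \in [0,1]$ the map $t \mapsto t^{r}$ is operator monotone, so $A \leq \delta B$ implies $A^{r} \leq (\delta B)^{r} = \delta^{r} B^{r}$. Taking $\delta = W(A/B)$ gives $W(A^{r}/B^{r}) \leq W(A/B)^{r}$, whence $\log W(A^{r}/B^{r}) \leq r\log W(A/B) \leq r\,d(A,B)$, the last step using $\log W(A/B) \leq d(A,B)$ together with $r \geq 0$. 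The same bound holds with $A$ and $B$ interchanged, so taking the maximum yields $d(A^{r},B^{r}) \leq r\,d(A,B)$ for $r \in [0,1]$. For $r \in [-1,0)$ I would set $r = -|r|$ and combine the inverse-invariance from $(i)$ with the case just proved: $d(A^{r},B^{r}) = d\bigl((A^{|r|})^{-1},(B^{|r|})^{-1}\bigr) = d(A^{|r|},B^{|r|}) \leq |r|\,d(A,B)$.

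Finally, for $(iii)$, put $M = \max\{d(A,C),\, d(B,D)\}$. Since $d(A,C) \leq M$ forces $A \leq e^{M} C$ and $C \leq e^{M} A$, and similarly $B \leq e^{M} D$ and $D \leq e^{M} B$, adding these inequalities in each direction gives $A+B \leq e^{M}(C+D)$ and $C+D \leq e^{M}(A+B)$. Hence both $\log W(A+B/C+D)$ and $\log W(C+D/A+B)$ are at most $M$, so $d(A+B,C+D) \leq M$. The special case follows by taking $A=C$, for which $d(A,A)=0$ and the bound reduces to $d(A+B,A+D) \leq d(B,D)$.
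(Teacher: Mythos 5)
The paper itself offers no proof of this lemma: it is recalled from the cited sources \cite{L,T}, so there is no internal argument to compare against. Your proof is correct and is essentially the standard one from that literature: the order-characterization of $W(\cdot/\cdot)$ with attained infimum, order-reversal of inversion and congruence-invariance for $(i)$, the L\"owner--Heinz operator monotonicity of $t\mapsto t^{r}$ (plus reduction of negative exponents to inversion) for $(ii)$, and addition of the order inequalities $A\leq e^{M}C$, $B\leq e^{M}D$ for $(iii)$ --- all steps, including the subtle ones (attainment of the infimum, the use of $r\geq 0$ when passing from $\log W(A/B)\leq d(A,B)$ to $r\log W(A/B)\leq r\,d(A,B)$), are handled correctly.
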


\begin{theorem}\label{tt1}
Consider the  pair of  matrix equations  as follows:
\begin{align}
X^s=Q_1 + \displaystyle \sum_{i=1}^{m} {A_i}^*F(X)A_i \label{te1},\\
X^s=Q_2 + \displaystyle \sum_{i=1}^{m} {A_i}^*G(X)A_i \label{te20}
\end{align}
where $Q_1,~Q_2$ are two $n\times n$ Hermitian positive definite matrices, $A_i's$ are arbitrary nonsingular complex matrices of order $n$, $F,~G$ are two functions from $P(n)$ to $P(n)$ and $s>1$, $s\in \mathbb{R}$. Assume that there is a  $a\in [0,\infty)$ such that for any $X\in P(n)$ with $d(X,I_n) \leq e^a$, where $d$ is the Thompson metric on $P(n)$, the following conditions hold:
\begin{enumerate}
\item [$(A)$] $\lambda_{max} \left(Q_1^{-\frac{1}{2}}Q_2Q_1^{-\frac{1}{2}}\right) \leq \lambda_{max} \left(F(X)^{-\frac{1}{2}}G(Y) F(X)^{-\frac{1}{2}}\right)$ and $\lambda_{max} \left(Q_2^{-\frac{1}{2}}Q_1Q_2^{-\frac{1}{2}}\right) \leq \lambda_{max} \left(G(Y)^{-\frac{1}{2}}F(X) G(Y)^{-\frac{1}{2}}\right)$ ;
\item [$(B)$]  $\lambda_{max} \left(F(X)^{-\frac{1}{2}}G(Y) F(X)^{-\frac{1}{2}}\right) \leq \left( \lambda_{max} \left(X^{-\frac{1}{2}}YX^{-\frac{1}{2}}\right)\right)^l$ and \\ $\lambda_{max} \left(G(Y)^{-\frac{1}{2}}F(X) G(Y)^{-\frac{1}{2}}\right) \leq  \left(\lambda_{max} \left(Y^{-\frac{1}{2}}X Y^{-\frac{1}{2}}\right)\right)^l$, where $l>0$ is a real number such that $l<s$;
\item [$(C)$]  $\lambda_{max} \left( I_n^{-\frac{1}{2}}\left(Q_1+ \sum_{i=1}^{m} {A_i}^*F(X)A_i\right)^\frac{1}{s} I_n^{-\frac{1}{2}}\right),$ $\lambda_{max} \left( \left(\left(Q_1+ \sum_{i=1}^{m} {A_i}^*F(X)A_i\right)^\frac{1}{s} \right)^{-\frac{1}{4}}\right),$  $\lambda_{max} \left( I_n^{-\frac{1}{2}}\left(Q_2+ \sum_{i=1}^{m} {A_i}^*G(X)A_i\right)^\frac{1}{s} I_n^{-\frac{1}{2}}\right)$, $\lambda_{max} \left( \left(\left(Q_2+ \sum_{i=1}^{m} {A_i}^*G(X)A_i\right)^\frac{1}{s} \right)^{-\frac{1}{4}}\right)$ $\leq e^a$.
\end{enumerate}
Then the following assertions hold:
\begin{enumerate}
\item[$(i)$] the pair of matrix equations \eqref{te1} and \eqref{te20} posses a unique common solution $\bar{X}$ such that $\bar{X}$ is positive definite and $d(\bar{X},I_n)\leq a$;
\item [$(ii)$] the unique solution is provided by the limit of the iterative sequence $\{X_k\}$, where 
\begin{equation}
	X_k =\left\{
	\begin{tabular}{ccc}
		$\left(Q_1 + \displaystyle \sum_{i=1}^{m} {A_i}^*F(X_{k-1})A_i\right)^{\frac{1}{s}}$ & if & $k$ is odd, \\
		$\left(Q_2 + \displaystyle \sum_{i=1}^{m} {A_i}^*G(X_{k-1})A_i\right)^{\frac{1}{s}}$ & if & $k$ is even, 
	\end{tabular}
	\right.
\end{equation}
and 
$X_0 \in P(n)$ is any element with $d(X_0,I_n) \leq a$, and the error estimation is given by $$d(X_k,\bar{X}) \leq \frac{\alpha^{k-1}}{1-\alpha}d(X_0,X_1)$$ 
for all $k\in \mathbb{N}$, where $\alpha:=\frac{l}{s}$.
\end{enumerate}
\end{theorem}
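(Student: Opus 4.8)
The plan is to recast the pair of equations \eqref{te1}--\eqref{te20} as a common fixed point problem and then invoke Theorem \ref{th1}. To this end, I would introduce the operators $T_1,T_2:P(n)\to P(n)$ defined by
\begin{equation*}
T_1(X)=\left(Q_1+\sum_{i=1}^m A_i^* F(X) A_i\right)^{1/s},\qquad T_2(X)=\left(Q_2+\sum_{i=1}^m A_i^* G(X) A_i\right)^{1/s},
\end{equation*}
so that $X$ solves \eqref{te1} exactly when $X=T_1(X)$ and solves \eqref{te20} exactly when $X=T_2(X)$; hence a common positive definite solution is precisely a common fixed point of $T_1$ and $T_2$. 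The natural ambient space is the closed Thompson ball $C:=\{X\in P(n):d(X,I_n)\le a\}$, which, being a closed subset of the complete metric space $(P(n),d)$, is itself complete.

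First I would use condition $(C)$ to show $T_1,T_2$ map $C$ into itself. For $X\in C$, condition $(C)$ bounds both $\lambda_{max}(T_i(X))$ and $\lambda_{max}(T_i(X)^{-1})$ by $e^a$; since $d(Y,I_n)=\max\{\log\lambda_{max}(Y),\log\lambda_{max}(Y^{-1})\}$, taking logarithms yields $d(T_i(X),I_n)\le a$, i.e. $T_i(X)\in C$. Thus $T_1,T_2$ are self-maps of the complete space $C$.

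The heart of the argument is the contraction estimate. Applying property $(ii)$ of the Thompson metric with $r=1/s\in(0,1)$ (legitimate since $s>1$), then the iterated triangle-type inequality $(iii)$ together with the congruence invariance $d(A_i^* Y A_i,A_i^* Z A_i)=d(Y,Z)$ from $(i)$ (valid as each $A_i$ is nonsingular), I would obtain
\begin{align*}
d(T_1(X),T_2(Y)) &\le \tfrac{1}{s}\, d\!\left(Q_1+\sum_{i=1}^m A_i^* F(X) A_i,\ Q_2+\sum_{i=1}^m A_i^* G(Y) A_i\right)\\
&\le \tfrac{1}{s}\max\{d(Q_1,Q_2),\, d(F(X),G(Y))\}.
\end{align*}
Expressing $d$ through $W(\cdot/\cdot)$, condition $(A)$ reads $W(Q_2/Q_1)\le W(G(Y)/F(X))$ and $W(Q_1/Q_2)\le W(F(X)/G(Y))$, whence $d(Q_1,Q_2)\le d(F(X),G(Y))$ and the maximum collapses to $d(F(X),G(Y))$. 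Likewise, taking logarithms in $(B)$ gives $\log W(G(Y)/F(X))\le l\,\log W(Y/X)$ and $\log W(F(X)/G(Y))\le l\,\log W(X/Y)$; taking the maximum and using $l>0$ yields $d(F(X),G(Y))\le l\,d(X,Y)$. Combining, $d(T_1(X),T_2(Y))\le \tfrac{l}{s}\,d(X,Y)=\alpha\,d(X,Y)$ with $\alpha:=\tfrac{l}{s}\in[0,1)$ since $l<s$.

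Finally, choosing $\psi(t_1,t_2,t_3)=\alpha t_1$, which belongs to $\Psi_\alpha$ by example $(i)$, the displayed bound is exactly the contractive condition \eqref{e1} for $T_1,T_2$ on $C$. Theorem \ref{th1} then delivers a unique common fixed point $\bar X\in C$ (so $\bar X$ is positive definite with $d(\bar X,I_n)\le a$), the convergence to $\bar X$ of the iteration $\{X_k\}$ described in $(ii)$, and the error estimate $d(X_k,\bar X)\le\frac{\alpha^{k-1}}{1-\alpha}d(X_0,X_1)$. I expect the main obstacle to be the faithful translation of the eigenvalue inequalities $(A)$ and $(B)$ into Thompson-metric inequalities — keeping track of which $W(\cdot/\cdot)$ corresponds to which $\lambda_{max}$ and handling the maxima of logarithms correctly — together with the bookkeeping, via $(C)$, that guarantees every iterate stays inside $C$, the region on which $(A)$ and $(B)$ are assumed to hold.
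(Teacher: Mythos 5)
Your proposal is correct and follows essentially the same route as the paper's own proof: the same Thompson ball $\mathcal{C}=\{X\in P(n):d(X,I_n)\le a\}$, the same operators $T_1,T_2$, condition $(C)$ for the self-map property, Lemma properties $(i)$--$(iii)$ plus conditions $(A)$ and $(B)$ to reach $d(T_1(X),T_2(Y))\le \tfrac{l}{s}\,d(X,Y)$, and finally Theorem \ref{th1} with $\psi(t_1,t_2,t_3)=\alpha t_1$. The only difference is cosmetic: the paper spells out the inequality $d\left(\sum_{i=1}^m A_i^*F(X)A_i,\sum_{i=1}^m A_i^*G(Y)A_i\right)\le d(F(X),G(Y))$ by an explicit term-by-term induction, which you compress into a single appeal to iterated use of $(iii)$ and congruence invariance.
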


\begin{proof}
Consider the set $\mathcal{C}=\{X\in P(n):d(X,I_n)\leq a\}$ 
and the Thompson metric $d$ on $\mathcal{C}$. 
Then $\mathcal{C}$ is complete with respect to the Thompson metric $d$.  Let us now consider a pair of mappings $T_1,T_2 :\mathcal{C} \to P(n)$ by 
\begin{equation}\label{te3}
T_1(X)=\left(Q_1 + \displaystyle \sum_{i=1}^{m} {A_i}^*F(X)A_i\right)^{\frac{1}{s}}
\end{equation}
\begin{equation}\label{te4}
T_2(X)=\left(Q_2 + \displaystyle \sum_{i=1}^{m} {A_i}^*G(X)A_i\right)^{\frac{1}{s}}
\end{equation}
for all $X \in \mathcal{C}$. 
From the condition $(C)$, we get 
$$d(T_1(X),I_n)=d\left(\left(Q_1+ \sum_{i=1}^{m} {A_i}^*F(X)A_i\right)^\frac{1}{s},I_n\right)\leq a$$ 
and 
$$d(T_2(X),I_n)=d\left(\left(Q_2+ \sum_{i=1}^{m} {A_i}^*G(X)A_i\right)^\frac{1}{s},I_n\right)\leq a.$$  
This yields that $T_1,T_2$ are two self-mappings on $\mathcal{C}$.

Next, we will show that $T_1$ and $T_2$ satisfy the contractive condition in Theorem  \ref{th1}. Let $X, Y\in \mathcal{C}$ be arbitrary. Then we have
\begin{eqnarray}
d\left(T_1(X),T_2(Y)\right)
&=&d\left(\left(Q_1 + \displaystyle \sum_{i=1}^{m} {A_i}^*F(X)A_i\right)^{\frac{1}{s}},\left(Q_2 + \displaystyle \sum_{i=1}^{m} {A_i}^*G(Y)A_i\right)^{\frac{1}{s}}\right)\nonumber\\
&\leq& \frac{1}{s} d\left(\left(Q_1 + \displaystyle \sum_{i=1}^{m} {A_i}^*F(X)A_i\right),\left(Q_2 + \displaystyle \sum_{i=1}^{m} {A_i}^*G(Y)A_i\right)\right)\nonumber\\
&\leq& \frac{1}{s} \max \left\{d(Q_1,Q_2),d\left(\displaystyle \sum_{i=1}^{m} {A_i}^*F(X)A_i,\sum_{i=1}^{m} {A_i}^*G(Y)A_i\right)\right\}. \nonumber \\
\label{te30}	
\end{eqnarray}
Now, we have 
\begin{align*}
&d\left(\displaystyle \sum_{i=m-1}^{m} {A_i}^*F(X)A_i,\sum_{i=m-1}^{m} {A_i}^*G(Y)A_i\right)\\
&\leq \max\left\{d\left(A_{m-1}^*F(X)A_{m-1},A_{m-1}^*G(Y)A_{m-1}\right),d\left(A_{m}^*F(X)A_{m},A_{m}^*G(Y)A_{m}\right)\right\}\\
&= \max\left\{d(F(X),G(Y)),d(F(X),G(Y))\right\}\\
&=d(F(X),G(Y)).
\end{align*}
Again, we have
\begin{align*}
&d\left(\displaystyle \sum_{i=m-2}^{m} {A_i}^*F(X)A_i,\sum_{i=m-2}^{m} {A_i}^*G(Y)A_i\right)\\
&\leq \max \left\{d\left(A_{m-2}^*F(X)A_{m-2},A_{m-2}^*G(Y)A_{m-2}\right),d\left(\displaystyle \sum_{i=m-1}^{m} {A_i}^*F(X)A_i,\sum_{i=m-1}^{m} {A_i}^*G(Y)A_i\right) \right\}\\
&\leq \max\left\{d(F(X),G(Y)),d(F(X),G(Y))\right\}\\
&=d(F(X),G(Y)).
\end{align*}
Continuing in this way, we can show that 
\begin{equation}\label{te40}
d\left(\displaystyle \sum_{i=1}^{m} {A_i}^*F(X)A_i,\sum_{i=1}^{m} {A_i}^*G(Y)A_i\right) \leq d(F(X),G(Y)).
\end{equation}
Using the equation \eqref{te40} in the equation \eqref{te30}, we get
\begin{equation}\label{te5}
d(T_1(X),T_2(Y))\leq \frac{1}{s} \max\left\{d(Q_1,Q_2),d(F(X),G(Y))\right\}.
\end{equation}
From the condition $(A)$, we get 
$$\log W(Q_2/Q_1)\leq \log W(G(Y)/F(X))$$ and  $$\log W(Q_1/Q_2)\leq \log W(F(X)/G(Y)).$$
This implies that  
\begin{align*}
\max \left\{\log W(Q_2/Q_1),\log W(Q_2/Q_1)\right\}&\leq  \max\left\{\log W(G(Y)/F(X)),\log W(G(Y)/F(X))\right\}\\
\Rightarrow d(Q_1,Q_2)&\leq d(F(X),G(Y)).
\end{align*}
Using this fact in the equation \eqref{te5}, we get
\begin{equation}\label{add 1}
	d(T_1(X),T_2(Y))\leq \frac{1}{s}d(F(X),G(Y)).
\end{equation}
It follows from the condition $(B)$ that $d(F(X),G(Y))\leq k d(X,Y)$. Using this fact with (\ref{add 1}), we get  
\begin{equation}\label{te6}
d(T_1(X),T_2(Y))\leq \frac{k}{s}d(X,Y).
\end{equation}
Let us take $\psi(t_1,t_2,t_3)=\frac{k}{s}t_1$ for all $t_1,t_2,t_3 \in \mathbb{R}_+$. Then $\psi \in \Psi_{\frac{k}{s}}$. Utilizing the definitions of $\phi$  in the equation \eqref{te6}, we get
$$d(T_1(X),T_2(Y))\leq \psi(d(X,Y),d(X,T_1(X)),d(Y,T_2(Y))).$$
So all the hypotheses of Theorem  \ref{th1} hold, and so 
 there exists $\bar{X}\in \mathcal{C}$ such that
$$T_1(\bar{X})=\bar{X}~\mbox{and}~ T_2(\bar{X})=\bar{X},$$ i.e., 
$$\bar{X}^s=Q_1 + \displaystyle \sum_{i=1}^{m} {A_i}^*F(\bar{X})A_i ~\mbox{and}~
\bar{X}^s=Q_2 + \displaystyle \sum_{i=1}^{m} {A_i}^*G(\bar{X})A_i. $$
Thus  $\bar{X}$ is a common solution of the pair of matrix equations \eqref{te1} and \eqref{te20}. Again since $\bar{X} \in \mathcal{C}$, we have $d(\bar{X},I_n) \leq a$. Moreover, since the common fixed point of $T_1$ and $T_2$ is unique, it follows that the common solution of equations \eqref{te1} and \eqref{te20} is also unique. 

Finally, by using the final part of  Theorem \ref{th1}, the given sequence $\{X_k\}$ considered in the statement  converges to the common solution $\bar{X}$.
\end{proof}

\begin{theorem} \label{tt2}
Consider the  pair of  matrix equations  as follows:
\begin{align}
X^r=\displaystyle \sum_{i=1}^{m} {A_i}^*F(X)A_i, \label{te7}\\
X^s=\displaystyle \sum_{i=1}^{m} {A_i}^*G(X)A_i \label{te8}
\end{align}
where $F,G$  are same as theorem \ref{tt1}, $A_i$'s are abritrary $n \times n$ unitary matrices, and $r,s>1$ are real  numbers. Assume that there is a real number $a\geq 0$ such that for any $X\in P(n)$ with $d(X,I_n)\leq e^{ra}$, the following conditions hold:
\begin{enumerate}
\item[$(A)$] $\lambda_{max}  \left(F(X)\right)\leq \frac{e^{ra}}{m},$ $\lambda_{\max}\left((F(X))^{-1}\right)\leq m e^{ra}$ and $\lambda_{max}  \left(G(X)\right)\leq \frac{e^{ra}}{m},$ $\lambda_{\max}\left((G(X))^{-1}\right)\leq m e^{ra}$;
\item [$(B)$]  $\lambda_{\max}\left(F(X)\right)\leq \frac{1}{m2^r} \left(\lambda_{\max}\left(Y^{-\frac{1}{2}}XY^{-\frac{1}{2}}\right)\right)^l$, $\lambda_{\max}\left(G(X)\right) \leq \frac{1}{m2^s} \left(\lambda_{\max}\left(Y^{-\frac{1}{2}}XY^{-\frac{1}{2}}\right)\right)^l$ and $\lambda_{\max}\left((F(X))^{-1}\right)$, $\lambda_{\max}\left((G(X))^{-1}\right) \leq m \left(\lambda_{\max}\left(X^{-\frac{1}{2}}YX^{-\frac{1}{2}}\right)\right)^l$, 
where $l>0$ is a real number such that $3l<\frac{rs}{r+s}.$
\end{enumerate}
Then the following assertions hold:
\begin{enumerate}
\item[$(i)$]the pair of matrix equations \eqref{te7} and \eqref{te8} posses a unique common solution $\bar{X}$ such that $\bar{X}$ is positive definite and $d(\bar{X},I_n)\leq ra$;
\item [$(ii)$] the unique solution is provided by the limit of the iterative sequence $\{X_k\}$, where 
\begin{equation}
X_k =\left\{
\begin{tabular}{ccc}
$\left(\displaystyle \sum_{i=1}^{m} {A_i}^*F(X_{k-1})A_i\right)^{\frac{1}{r}}$ & if & $k$ is odd, \\
$\left(\displaystyle \sum_{i=1}^{m} {A_i}^*G(X_{k-1})A_i\right)^{\frac{1}{s}}$ & if & $k$ is even, 
\end{tabular}
\right.
\end{equation}
and $X_0 \in P(n)$ is any element with $d(X_0,I_n) \leq ra$, and the error estimation is given by 
$$d(X_k,\bar{X}) \leq \frac{\alpha^{k-1}}{1-\alpha}d(X_0,X_1),$$ where $\alpha:=3l\left(\frac{1}{r}+\frac{1}{s}\right)$.
\end{enumerate}
\end{theorem}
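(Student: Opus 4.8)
The plan is to derive Theorem~\ref{tt2} from the abstract common fixed point result of Theorem~\ref{th1}, following the scheme already used for Theorem~\ref{tt1}. I would fix $a\ge 0$ as in the hypotheses and work inside the closed ball $\mathcal{C}=\{X\in P(n):d(X,I_n)\le ra\}$, which, being a closed subset of the complete space $(P(n),d)$, is itself complete under the Thompson metric. On $\mathcal{C}$ define the two solution maps
\[
T_1(X)=\left(\sum_{i=1}^m A_i^*F(X)A_i\right)^{1/r},\qquad T_2(X)=\left(\sum_{i=1}^m A_i^*G(X)A_i\right)^{1/s}.
\]
A matrix $\bar X$ solves the pair \eqref{te7}--\eqref{te8} exactly when $T_1(\bar X)=\bar X=T_2(\bar X)$, so it is enough to produce a unique common fixed point of $T_1$ and $T_2$ in $\mathcal{C}$, and then read off the iteration and error estimate from Theorem~\ref{th1}.

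First I would check that $T_1,T_2$ map $\mathcal{C}$ into itself. Here the unitarity of the $A_i$ is decisive: conjugation by a unitary matrix preserves the spectrum, so each $A_i^*F(X)A_i$ has the same eigenvalues as $F(X)$, giving $\lambda_{\max}\!\left(\sum_i A_i^*F(X)A_i\right)\le m\,\lambda_{\max}(F(X))$, while the relation $A_i^*F(X)A_i\succeq \lambda_{\min}(F(X))I_n$, summed over $i$, yields $\lambda_{\min}\!\left(\sum_i A_i^*F(X)A_i\right)\ge m\,\lambda_{\min}(F(X))$. Substituting the bounds of condition $(A)$ into these two inequalities gives $\lambda_{\max}\le e^{ra}$ and $\lambda_{\min}\ge e^{-ra}$ for the inner sum, hence $d\!\left(\sum_i A_i^*F(X)A_i,I_n\right)\le ra$; by part $(ii)$ of the Thompson-metric lemma, $d(T_1(X),I_n)\le \tfrac1r\,d\!\left(\sum_i A_i^*F(X)A_i,I_n\right)\le a\le ra$. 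The identical computation with $(G,s)$ in place of $(F,r)$ shows $d(T_2(X),I_n)\le ra$, so both maps are self-maps of $\mathcal{C}$.

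The contractive inequality is the heart of the argument and the step I expect to be the main obstacle. In contrast to Theorem~\ref{tt1}, the two maps now carry different roots $1/r$ and $1/s$, so the device used there---collapsing the comparison onto a single common root via part $(ii)$ of the lemma---is no longer available. My plan is to route the comparison through the identity matrix:
\[
d(T_1(X),T_2(Y))\le d(T_1(X),I_n)+d(I_n,T_2(Y))\le \frac1r\,d\!\left(\sum_{i=1}^m A_i^*F(X)A_i,\,I_n\right)+\frac1s\,d\!\left(\sum_{i=1}^m A_i^*G(Y)A_i,\,I_n\right).
\]
Each inner distance I would estimate by treating $\log\lambda_{\max}$ and $-\log\lambda_{\min}$ of the unitary sum separately, inserting condition $(B)$ and using $\log\lambda_{\max}(Y^{-1/2}XY^{-1/2})=\log W(X/Y)\le d(X,Y)$ together with the companion inequality $\log W(Y/X)\le d(X,Y)$. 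The coefficients $\tfrac{1}{m2^r}$, $\tfrac{1}{m2^s}$ and $m$ occurring in $(B)$ are exactly what is needed to cancel the factor $m$ produced by the $m$-fold sum; carrying the bookkeeping through produces a bound of the form $3l\,d(X,Y)$ for each inner distance, whence $d(T_1(X),T_2(Y))\le 3l\!\left(\tfrac1r+\tfrac1s\right)d(X,Y)=\alpha\,d(X,Y)$ with $\alpha:=3l\!\left(\tfrac1r+\tfrac1s\right)$.

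Finally I would take the control function $\psi(t_1,t_2,t_3)=\alpha t_1$, which belongs to $\Psi_\alpha$; the displayed contraction is then precisely hypothesis \eqref{e1} of Theorem~\ref{th1}. The numerical assumption $3l<\tfrac{rs}{r+s}$ is equivalent to $\alpha=3l\cdot\tfrac{r+s}{rs}<1$, so indeed $\alpha\in[0,1)$ and Theorem~\ref{th1} applies: $T_1$ and $T_2$ possess a unique common fixed point $\bar X\in\mathcal{C}$, which is the asserted unique positive definite common solution of \eqref{te7}--\eqref{te8} satisfying $d(\bar X,I_n)\le ra$. Part $(ii)$---the alternating iteration $\{X_k\}$ and the error estimate with this value of $\alpha$---transfers verbatim from part $(ii)$ of Theorem~\ref{th1}, since the scheme there with $u_k=X_k$ is identical to the one in the statement.
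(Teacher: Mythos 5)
Your proposal is correct, and while its scaffolding (the ball $\mathcal{C}$, the maps $T_1,T_2$, the reduction to Theorem \ref{th1} via $\psi(t_1,t_2,t_3)=\alpha t_1$, and the self-map check, which is just a spectral rephrasing of the paper's use of $I_n=\sum_{i=1}^m A_i^*\left(\tfrac1m I_n\right)A_i$ with condition $(A)$) matches the paper, your contraction estimate takes a genuinely different and simpler route. The paper does not pass through $I_n$ directly: it runs the triangle inequality along the four-leg path $T_1(X)\to T_1(X)+\tfrac12 I_n\to T_1(X)+T_2(Y)\to T_2(Y)+\tfrac12 I_n\to T_2(Y)$, reduces via part $(iii)$ of the Thompson-metric lemma to $d(T_1(X),I_n)+d(T_2(Y),I_n)+2d\left(T_1(X),\tfrac12 I_n\right)+2d\left(T_2(Y),\tfrac12 I_n\right)$, and bounds each piece by $\tfrac{l}{r}d(X,Y)$ or $\tfrac{l}{s}d(X,Y)$ using condition $(B)$; that is precisely where the factor $3$ in $\alpha=3l\left(\tfrac1r+\tfrac1s\right)$ comes from, and the constants $\tfrac{1}{m2^r}$, $\tfrac{1}{m2^s}$ in $(B)$ exist only to serve the $d\left(T_1(X),\tfrac12 I_n\right)$-type legs. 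Your two-leg route $d(T_1(X),T_2(Y))\le d(T_1(X),I_n)+d(I_n,T_2(Y))$ needs only the weaker consequences $m\lambda_{\max}(F(X))\le W(X/Y)^l$ and $\tfrac1m\lambda_{\max}\left((F(X))^{-1}\right)\le W(Y/X)^l$ of $(B)$, and the bookkeeping actually yields $l\,d(X,Y)$ per inner distance (your statement that each is bounded by $3l\,d(X,Y)$ is overly generous, though harmlessly so), hence the sharper contraction constant $l\left(\tfrac1r+\tfrac1s\right)$. This discrepancy with the stated $\alpha$ costs nothing: since $t\mapsto\tfrac{t^{k-1}}{1-t}$ is nondecreasing on $[0,1)$, a contraction with constant $l\left(\tfrac1r+\tfrac1s\right)\le\alpha<1$ implies the error estimate displayed in assertion $(ii)$ with $\alpha=3l\left(\tfrac1r+\tfrac1s\right)$ verbatim, and existence and uniqueness are unaffected. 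In short, you prove a slightly stronger contraction with less machinery; the paper's detour through $\tfrac12 I_n$ buys nothing beyond reproducing exactly the constant $\alpha$ that appears in the statement.
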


\begin{proof}
Consider the set $\mathcal{C}=\{X\in P(n):d(X,I_n)\leq ra\}$ and the Thompson metric $d$ on $\mathcal{C}$. Then $\mathcal{C}$ is complete with respect to the Thompson metric $d$. 
We now consider a pair of mappings $T_1,T_2:\mathcal{C}\to P(n)$ by 
\begin{align}
T_1(X)= \left(\sum_{i=1}^{m} {A_i}^*F(X)A_i\right)^\frac{1}{r}, \label{te9}\\
T_2(X)= \left(\sum_{i=1}^{m} {A_i}^*G(X)A_i\right)^\frac{1}{s} \label{te10}
\end{align} 
for all $X\in \mathcal{C}$.  For any $X\in \mathcal{C}$, we have
\begin{align*}
d(T_1(X),I_n) &= d\left(\left(\sum_{i=1}^{m} {A_i}^*F(X)A_i\right)^\frac{1}{r},(I_n)^\frac{1}{r}\right)\\
&\leq \frac{1}{r} d\left(\left(\sum_{i=1}^{m} {A_i}^*F(X)A_i\right), I_n\right)\\
&=\frac{1}{r} d\left(\left(\sum_{i=1}^{m} {A_i}^*F(X)A_i\right), \sum_{i=1}^{m}A_i^*\left(\frac{1}{m}I_n\right){A_i}\right)\\
&\leq \frac{1}{r} d\left(F(X),\left(\frac{1}{m}I_n\right)\right).
\end{align*}
Using condition $(A)$, we get 
\begin{align*}
\lambda_{\max}\left(\left(\frac{1}{m}I_n\right)^{-\frac{1}{2}}F(X)\left(\frac{1}{m}I_n\right)^{-\frac{1}{2}}\right)&=\lambda_{\max}\left(mF(X)\right)\\
&=m\lambda_{\max}\left(F(X)\right)\\
&\leq e^{ra}\\
\Rightarrow \log \left(\lambda_{\max}\left(\left(\frac{1}{m}I_n\right)^{-\frac{1}{2}}F(X)\left(\frac{1}{m}I_n\right)^{-\frac{1}{2}}\right)\right) &\leq ra\\
\Rightarrow \log \left(W\left(F(X) \Bigg{/}\frac{1}{m}I_n\right)\right)&\leq ra,
\end{align*}
and
\begin{align*}
\lambda_{\max}\left((F(X))^{-\frac{1}{2}}\frac{1}{m}I_n(F(X))^{-\frac{1}{2}}\right)&=\lambda_{\max}\left(\frac{1}{m}(F(X))^{-1}\right)\\
&=\frac{1}{m}\lambda_{\max}\left((F(X))^{-1}\right)\\
&\leq e^{ra}\\
\Rightarrow \log\left(\lambda_{\max}\left((F(X))^{-\frac{1}{2}}\frac{1}{m}I_n(F(X))^{-\frac{1}{2}}\right)\right)&\leq ra\\
\Rightarrow \log \left(W\left(\frac{1}{m}I_n \Bigg{/}F(X)\right)\right)&\leq ra.
\end{align*}
Therefore, $d(T_1(X),I_n)\leq ra$. Similarly, we have $d(T_2(X),I_n)\leq ra$. Thus $T_1,T_2$ are self-mappings on $\mathcal{C}$.

Next, we will show that $T_1$ and $T_2$ satisfies the contractive condition in Theorem  \ref{th1}. For  $X,Y \in \mathcal{C}$, we have 
\begin{eqnarray}
	d(T_1(X),T_2(Y))
	&\leq& d\left(T_1(X),T_1(X)+\frac{1}{2}I_n\right)+d\left(T_1(X)+\frac{1}{2}I_n,T_1(X)+T_2(Y)\right)\nonumber\\
	&& +d\left(T_2(Y),T_2(Y)+\frac{1}{2}I_n\right)+d\left(T_2(Y)+\frac{1}{2}I_n,T_1(X)+T_2(Y)\right)\nonumber\\
	&\leq& d\left(T_1(X),I_n\right)+d\left(I_n,T_1(X)+\frac{1}{2}I_n\right)+d\left(T_2(Y),\frac{1}{2}I_n\right)\nonumber\\
	&&+d\left(T_2(Y),I_n\right)+d\left(I_n,T_2(Y)+\frac{1}{2}I_n\right)+d\left(T_1(X),\frac{1}{2}I_n\right)\nonumber \label{t11}\\
	&\leq& d\left(T_1(X),I_n\right)+d\left(T_2(Y),I_n\right)+2d\left(T_1(X),\frac{1}{2}I_n\right)+2d\left(T_2(Y),\frac{1}{2}I_n\right). \nonumber \\
\end{eqnarray}
Using condition $(B)$, we get 
\begin{align*}
\lambda_{\max}\left(\left(\frac{1}{m}I_n\right)^{-\frac{1}{2}}F(X)\left(\frac{1}{m}I_n\right)^{-\frac{1}{2}}\right)&=\lambda_{\max}\left(mF(X)\right)\\
&=m\lambda_{\max}\left(F(X)\right)\\
&\leq m2^r\lambda_{\max}\left(F(X)\right)\\
&\leq \left(\lambda_{\max}\left(Y^{-\frac{1}{2}}XY^{-\frac{1}{2}}\right)\right)^k\\
\Rightarrow \log \left(\lambda_{\max}\left(\left(\frac{1}{m}I_n\right)^{-\frac{1}{2}}F(X)\left(\frac{1}{m}I_n\right)^{-\frac{1}{2}}\right)\right) &\leq k \log\left(\lambda_{\max}\left(Y^{-\frac{1}{2}}XY^{-\frac{1}{2}}\right)\right)\\
\Rightarrow \log \left(W\left(F(X) \Bigg{/}\frac{1}{m}I_n\right)\right)&\leq k\log \left(W\left(X \big{/}Y\right)\right)
\end{align*}
and
\begin{align*}
\lambda_{\max}\left((F(X))^{-\frac{1}{2}}\frac{1}{m}I_n(F(X))^{-\frac{1}{2}}\right)&=\lambda_{\max}\left(\frac{1}{m}(F(X))^{-1}\right)\\
&=\frac{1}{m}\lambda_{\max}\left((F(X))^{-1}\right)\\
&\leq \left(\lambda_{\max}\left(X^{-\frac{1}{2}}YX^{-\frac{1}{2}}\right)\right)^k\\
\Rightarrow \log\left(\lambda_{\max}\left((F(X))^{-\frac{1}{2}}\frac{1}{m}I_n(F(X))^{-\frac{1}{2}}\right)\right)&\leq k \log\left(\lambda_{\max}\left(X^{-\frac{1}{2}}YX^{-\frac{1}{2}}\right)\right)\\
\Rightarrow \log \left(W\left(\frac{1}{m}I_n \Bigg{/}F(X)\right)\right)&\leq k \log\left(W\left(Y \big{/}X\right)\right).
\end{align*}
Therefore, 
{\small
\begin{align*}
\max\left\{\log \left(W\left(F(X) \Bigg{/}\frac{1}{m}I_n\right)\right),\log \left(W\left(\frac{1}{m}I_n \Bigg{/}F(X)\right)\right)\right\}&\leq k\max\left\{\log \left(W\left(X \big{/}Y\right)\right),\log \left(W\left(Y \big{/}X\right)\right)\right\}\\
\Rightarrow d\left(F(X),\frac{1}{m}I_n\right)&\leq k d(X,Y)\\
\Rightarrow d(T_1(X),I_n)&\leq \frac{k}{r}d(X,Y).
\end{align*}
}
Similarly, we can show that $d(T_2(Y),I_n)\leq \frac{k}{s}d(X,Y).$ Again 
\begin{align*}
d\left(T_1(X),\frac{1}{2}I_n\right)&=d\left(\left(\sum_{i=1}^{m} {A_i}^*F(X)A_i\right)^\frac{1}{r},\left(\frac{1}{2^r}I_n\right)^\frac{1}{r}\right)\\
&\leq \frac{1}{r} d\left(\left(\sum_{i=1}^{m} {A_i}^*F(X)A_i\right),\left(\frac{1}{2^r}I_n\right)\right)\\
&=\frac{1}{r} d\left(\left(\sum_{i=1}^{m} {A_i}^*F(X)A_i\right),\left(\sum_{i=1}^{m}A_i^*\left(\frac{1}{m 2^r}I_n\right)A_i\right)\right)\\
&\leq \frac{1}{r} d\left(F(X),\frac{1}{m 2^r}I_n\right).
\end{align*} 
Using the condition $(B)$,we get
\begin{align*}
\lambda_{\max}\left(\left(\frac{1}{m2^r}I_n\right)^{-\frac{1}{2}}F(X)\left(\frac{1}{m2^r}I_n\right)^{-\frac{1}{2}}\right)&=\lambda_{\max}\left(m2^rF(X)\right)\\
&=m2^r \lambda_{\max}(F(X))\\
&\leq \left(\lambda_{\max}\left(Y^{-\frac{1}{2}}XY^{-\frac{1}{2}}\right)\right)^k\\
\Longrightarrow \log \left(\lambda_{\max}\left(\left(\frac{1}{m2^r}I_n\right)^{-\frac{1}{2}}F(X)\left(\frac{1}{m2^r}I_n\right)^{-\frac{1}{2}}\right)\right) &\leq k \log\left(\lambda_{\max}\left(Y^{-\frac{1}{2}}XY^{-\frac{1}{2}}\right)\right)\\
\Longrightarrow \log \left(W\left(F(X) \Bigg{/}\frac{1}{m2^r}I_n\right)\right)&\leq k\log \left(W\left(X \big{/}Y\right)\right),
\end{align*}
and
\begin{align*}
\lambda_{\max}\left(\left(F(X)\right)^{-\frac{1}{2}}\frac{1}{m2^r}I_n\left(F(X)\right)^{-\frac{1}{2}}\right)&=\lambda_{\max}\left(\frac{1}{m2^r}\left(F(X)\right)^{-1}\right)\\
&=\frac{1}{m2^r}\lambda_{\max}\left(\left(F(X)\right)^{-1}\right)\\
&\leq \frac{1}{m}\lambda_{\max}\left(\left(F(X)\right)^{-1}\right)\\
&\leq \left(\lambda_{\max}\left(X^{-\frac{1}{2}}YX^{-\frac{1}{2}}\right)\right)^k\\
\Rightarrow \log\left(\lambda_{\max}\left(\left(F(X)\right)^{-\frac{1}{2}}\frac{1}{m2^r}I_n\left(F(X)\right)^{-\frac{1}{2}}\right)\right)&\leq k \log\left(\lambda_{\max}\left(X^{-\frac{1}{2}}YX^{-\frac{1}{2}}\right)\right)\\
\Rightarrow \log\left(W\left(\frac{1}{m2^r}I_n\Bigg{/}F(X)\right)\right)&\leq k \log\left(Y/X\right).
\end{align*}
Therefore,
{\small
\begin{align*}
\max\left\{\log \left(W\left(F(X) \Bigg{/}\frac{1}{m2^r}I_n\right)\right),\log\left(W\left(\frac{1}{m2^r}I_n\Bigg{/}F(X)\right)\right)\right\}&\leq k\max\left\{\log \left(W\left(X \big{/}Y\right)\right),\log \left(W\left(Y \big{/}X\right)\right)\right\}\\
\Longrightarrow d\left(F(X),\frac{1}{m2^r}I_n\right)&\leq kd(X,Y)\\
\Longrightarrow d\left(T_1(X),\frac{1}{2}I_n\right)&\leq \frac{k}{r}d(X,Y).
\end{align*}
}
Similarly, we can show that $d\left(T_2(Y),\frac{1}{2}I_n\right)\leq \frac{k}{s}d(X,Y).$  Utilizing all these calculations in equation \eqref{t11}, we deduce  
$$d(T_1(X),T_2(Y))\leq \alpha d(X,Y),$$
where $\alpha=3k\left(\frac{1}{r}+\frac{1}{s}\right)<1$.

Let us consider $\psi(t_1,t_2,t_3)=\alpha t_1$ for all $t_1,t_2,t_3 \in \mathbb{R}_+$. Then $\psi\in \Psi_{\alpha}$ and 
$$ d(T_1(X),T_2(Y)) \leq \psi\left(d(X,Y),d(X,T_1(X)),d(Y,T_2(Y))\right)$$ for all $X,Y \in \mathcal{C}$. The rest of the proof is similar to that of Theorem \ref{tt1} and so omitted.
\end{proof}
\section{Numerical examples}
In this section, we validate the results obtained in the previous section by some examples and showing 
the numerical approximations of convergence of iterated sequences using  diagrams.
\begin{example}
Consider the system of matrix equations \eqref{te1} and \eqref{te20} for $s=2$, $m=1$, $F(X)=X^{\frac{1}{2}}$ and $G(X)=X^{\frac{1}{3}}$, that is,
\begin{align}
X^2=Q_1&+A_1^*X^{\frac{1}{2}}A_1 \label{ne1}\\
X^2=Q_2&+A_1^*X^{\frac{1}{3}}A_1 \label{ne2},
\end{align}
where we choose \( A_1=
  \begin{pmatrix}
    4+5i & 2+7i & -3+2i  \\
    5-6i & 4-3i & -i+8\\
    9-i & 5+2i & 6-2i 
  \end{pmatrix},\ 
Q_1=
  \begin{pmatrix}
    2 & -1 & 0  \\
    - 1 & 2 & -1\\
    0 & -1 & 2
  \end{pmatrix} \mbox{and }
  \) \\ 
\(Q_2=
  \begin{pmatrix}
    6 & 0 & 0 \\
    0 & 6 & 0\\
    0 & 0 & 6
  \end{pmatrix}.
\)

Then if we choose $a=10$, then using MATLAB, we check that  for any $X\in P(3)$ with $d(X,I_3)\leq e^{10}$, the assumptions $(A)-(C)$ of Theorem \ref{tt1} are fulfilled. So by the consequences of this theorem, matrix equations \eqref{ne1} and \eqref{ne2} posses a unique common solution $\bar{X}$ such that $\bar{X}$ is positive definite and $d(\bar{X},I_3)\leq 10$.
 Indeed, we see that the unique positive definite solution is
\[ \bar{X}=
  \begin{pmatrix}
    5.6933  & 2.4413+1.3428i & 1.7040+0.5152i\\
 2.4413-1.3428i & 4.4438 &  0.7308+0.3634i\\
 1.7040-0.5152i & 0.7308-0.3634i & 5.0193
  \end{pmatrix}.
\]
Note that $d(\bar{X},I_3)=2.17614\leq 10$. The graphical representation of the convergence history  of the sequence $\{X_n\}$ considered in Theorem \ref{tt1} to the unique solution by taking two distinct initial values is shown in Figure 1.
\begin{figure}[h!]
\begin{center}
\textbf{Convergence behaviour}
\includegraphics[scale=1.00]{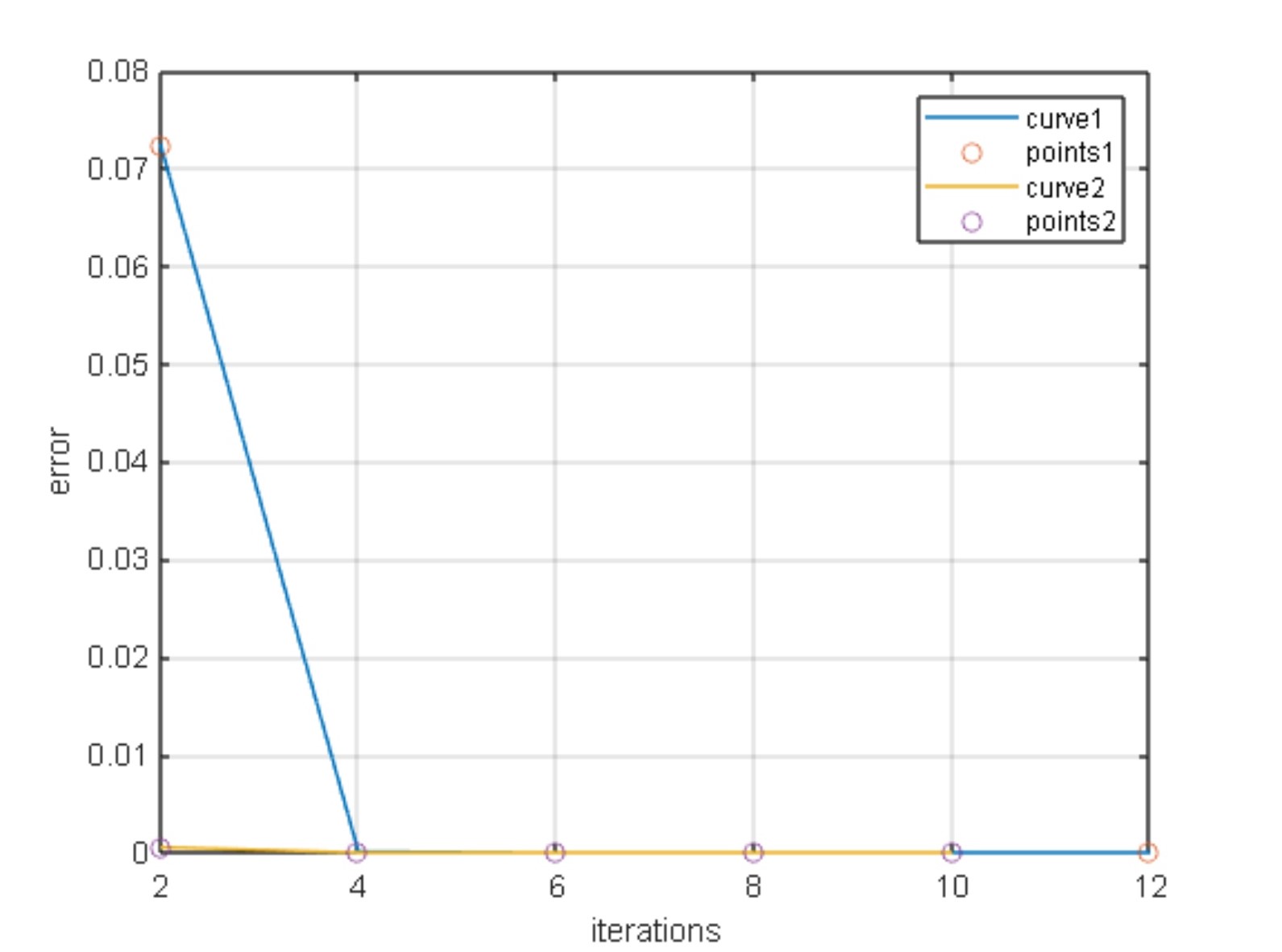}
\textbf{Figure 1}

\end{center}
\end{figure}
\end{example}
\begin{example}
We consider the system of matrix equations \eqref{te7} and \eqref{te8} for $r=2$, $s=3$, $m=1$, $F(X)=X^{\frac{1}{2}}$ and $G(X)=X^{\frac{1}{4}}$, that is, we consider the following system of equations
\begin{align}
X^2=A_1^*X^{\frac{1}{2}}A_1\label{ne3}\\
X^3=A_1^*X^{\frac{1}{4}}A_1 \label{ne4},
\end{align}
where we choose
\[ A_1=
  \frac{1}{3}\begin{pmatrix}
    2 & -2 & 1  \\
    1 & 2 & 2\\
    2 & 1 & -2 
  \end{pmatrix}.\]
 Then for $a=2$, using MATLAB, we check that for any $X\in P(3)$ with $d(X,I_3)\leq e^{2\cdot 2}=e^4$, the assumptions $(A),\ (B)$ of Theorem \ref{tt2} are fulfilled.  So by this theorem, matrix equations \eqref{ne3}and \eqref{ne4} posses a unique common solution $\bar{X}$ such that $\bar{X}$ is positive definite and $d(\bar{X},I_3)\leq 2$.  We obtain by calculating  that the unique common positive definite solution $\bar{X}$ is 
\[ \bar{X}=
  \begin{pmatrix}
    1  & 0 & 0\\
 0 & 1 &  0\\
0 & 0 & 1
  \end{pmatrix}.
\]
Note that $d(\bar{X},I_3)=0\leq 2$. The graphical representation of the convergence history  of the sequence $\{X_n\}$ considered in Theorem \ref{tt2} to the unique solution by taking two distinct initial values is shown in Figure 2.
 \begin{figure}[h!]
\begin{center}
\textbf{Convergence behaviour}\\
\includegraphics[scale=2.50]{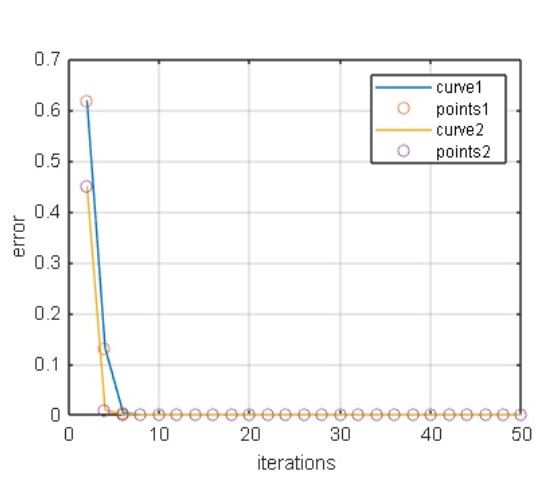}\\
\textbf{Figure 2}

\end{center}
\end{figure}
\end{example}
\begin{Acknowledgement}
The first and second authors are  thankful to CSIR, Government of India for their financial support (Ref. No. 09/973 $(0018)/2017$-EMR-I and $25(0285)/18/$EMR-II).
\end{Acknowledgement}


\end{document}